\providecommand{\texorpdfstring}[2]{#1}
\setlist[enumerate,1]{label=\upshape(\roman*)}
\g@addto@macro\bfseries{\boldmath}
\theoremstyle{plain}
\newtheorem{theorem}{Theorem}[section]
\newtheorem{proposition}[theorem]{Proposition}
\newtheorem{lemma}[theorem]{Lemma}
\theoremstyle{definition}
\newtheorem{definition}[theorem]{Definition}
\numberwithin{equation}{section}
\newcommand{\Lie}[1]{\operatorname{#1}}
\newcommand{\lie}[1]{\operatorname{\mathfrak{#1}}}
\newcommand{\G}{\Lie{G}}
\newcommand{\GL}{\Lie{GL}}
\newcommand{\SL}{\Lie{SL}}
\newcommand{\SO}{\Lie{SO}}
\newcommand{\SU}{\Lie{SU}}
\newcommand{\Spin}{\Lie{Spin}}
\newcommand{\su}{\lie{su}}
\newcommand{\spin}{\lie{spin}}
\newcommand{\lt}{\lie{t}}
\newcommand{\Hodge}{\mathop{}\!{*}}
\newcommand{\hook}{\mathbin{\lrcorner}}
\newcommand{\bC}{\mathbb{C}}
\newcommand{\bR}{\mathbb{R}}
\newcommand{\bZ}{\mathbb{Z}}
\newcommand{\cU}{\mathcal{U}}
\newcommand{\tW}{\widetilde{W}}
\DeclareMathOperator{\re}{Re}
\DeclareMathOperator{\im}{Im}
\DeclareMathOperator{\diag}{diag}
\DeclareMathOperator{\Stab}{Stab}
\DeclareMathOperator{\vol}{vol}
\DeclareMathOperator{\Hol}{Hol}
\DeclareMathOperator{\adj}{adj}
\DeclareFontFamily{U}{bigeuf}{}
\DeclareFontShape{U}{bigeuf}{m}{n}{<-6>s*[1.5]eufm5
<6-8>s*[1.5]eufm7
<8->s*[1.5]eufm10}{}
\DeclareSymbolFont{bigeufletters}{U}{bigeuf}{m}{n}
\DeclareMathSymbol{\sumcic}{\mathop}{bigeufletters}{`S}
\DeclarePairedDelimiter{\Span}{\langle}{\rangle}
\DeclarePairedDelimiter{\abs}{\lvert}{\rvert}
\DeclarePairedDelimiterX{\Set}[1]{\lbrace}{\rbrace}{%
#1 }
\DeclarePairedDelimiterX{\inp}[2]{\langle}{\rangle}{#1,#2}
\newcommand{\D}[2]{\frac{\partial #1}{\partial #2}}
\newcommand{\Dsq}[2]{\frac{\partial^2 #1}{\partial #2^2}}
\newcommand{\Dsqm}[3]{\frac{\partial^2 #1}{\partial #2\partial #3}}
\newcommand{\any}{\,\cdot\,}
\newcommand{\eqbreak}[1][2]{\\&\hspace{#1em}}
\newcommand{\eqand}[1][1]{\hspace{#1em}\text{and}\hspace{#1em}}
\newcommand{\eqcond}[2][2]{\hspace{#1em}\text{#2}}
\newcommand{\hyphen}{\nobreak-\nobreak\hskip0pt}
\begin{document}

\title{Toric geometry of \( \Spin(7) \)-manifolds}

\author[T.~B. Madsen]{Thomas Bruun Madsen}

\author[A.~F. Swann]{Andrew Swann}

\address[T.~B. Madsen]{Centre for Quantum Geometry of Moduli Spaces\\
Aarhus University\\
Ny Munkegade 118, Bldg 1530\\
8000 Aarhus\\
Denmark}

\email{thomas.madsen@math.au.dk}

\address[A.~F. Swann]{Department of Mathematics, Centre for Quantum
Geometry of Moduli Spaces, and Aarhus University Centre for
Digitalisation, Big Data and Data Analytics\\
Aarhus University\\
Ny Munkegade 118, Bldg 1530\\
8000 Aarhus\\
Denmark}

\email{swann@math.au.dk}

\begin{abstract}
  We study \( \Spin(7) \)-manifolds with an effective multi\hyphen
  Hamiltonian action of a four-torus. On an open dense set, we provide
  a Gibbons-Hawking type ansatz that describes such geometries in
  terms of a symmetric \( 4\times4 \)-matrix of functions.
  This description leads to the first known \( \Spin(7) \)-manifolds
  with a rank \( 4 \) symmetry group and full holonomy.
  We also show that the multi-moment map exhibits the full orbit space
  topologically as a smooth four-manifold, containing a trivalent
  graph in \( \bR^4 \) as the image of the set of the special orbits.
\end{abstract}

\subjclass[2010]{Primary 53C25; secondary 53C29, 53D20, 57R45, 70G45}

\maketitle

\begin{center}
  \begin{minipage}{0.8\linewidth}
    \microtypesetup{protrusion=false} \small \tableofcontents
  \end{minipage}
\end{center}

\section{Introduction}
\label{sec:intro}

It was Berger~\cite{Beger:hol-clsf} who first realised that the Lie
group \( \Spin(7) \) could potentially arise as the holonomy group of
a non-symmetric irreducible Riemannian manifold.
A decade later, Bonan~\cite{Bonan:exc-hol} showed that such manifolds
would necessarily be Ricci-flat and come with a parallel \( 4 \)-form
of a certain algebraic type.
Subsequently the understanding that parallelness amounts to closedness
(cf.~\cite{Fernandez:Spin7-manf}) has been a powerful tool when
looking for examples.
The first \( 8 \)-manifolds with holonomy equal to \( \Spin(7) \) were
constructed in the late
1980s~\cite{Bryant:excep-hol,Bryant-S:excep-hol} by Bryant and Salamon
and since then many examples have followed, including compact ones by
Joyce~\cite{Joyce:cpt-Spin7a,Joyce:cpt-Spin7b}.
In fact, a torsion-free \( \Spin(7) \)-structure can be obtained from
any closed spin \( 7 \)-manifold~\cite{Crowley-Nordstrom:G2-inv}, but
in general this will neither be complete nor have full holonomy.

Despite considerable advances in the field, it still remains a
challenge to construct new complete examples with holonomy equal to
\( \Spin(7) \).
A natural way to approach this problem systematically is to consider
examples with a specific type of symmetry.
Indeed, a key point in constructing the first examples was to apply
cohomogeneity one symmetry techniques, and in~\cite{Madsen:Spin7andT3}
the first author gave a description of \( \Spin(7) \)-manifolds with
\( T^3 \)-symmetry (these are characterised in terms of certain
tri-symplectic \( 4 \)-manifolds).

From a toric viewpoint, it is natural to consider
\( \Spin(7) \)-manifolds with a multi-Hamiltonian action of~\( T^4 \),
this the critical rank making the dimensions of the leaf space
\( M/T^{k} \) and the target space of the multi-moment map match.
As we will see, this gives the type of behaviour we expect from toric
Ricci-flat geometries (cf.~\cite{Dancer-S:hK,Madsen-S:toric-G2}).
We introduce the notion of a \emph{toric \( \Spin(7) \)-manifold} to
be a (torsion-free) \( \Spin(7) \)-manifold \( (M,\Phi) \) that comes
with an effective multi-Hamiltonian action of a rank four torus.
As we explain in~\S\ref{sec:eff-action}, this implies that we have a
multi-moment map \( \nu\colon M\to\bR^4 \) that exhibits an open dense
subset \( M_0\subset M \) as a principle \( T^4 \) bundle over an open
subset \( \cU\subset\bR^4 \).
On this regular part, we derive an analogue of the Gibbons-Hawking
ansatz.
What is needed in this case is a smooth positive definite section
\( V\in\Gamma(\cU,S^2(\bR^4)) \) satisfying a pair of PDEs.
One of these is a divergence-free condition and the other is a
quasi-linear elliptic second order PDE\@.
These equations are natural when one considers differential operators
that are invariant, up to scaling, under the \( \GL(4,\bR) \) action
resulting from changing the basis of the Lie algebra \( \lt^4 \) of
the torus \( T^4 \).

In order to achieve a complete understanding of \( M \), we address
the behaviour near singular orbits in~\S\ref{sec:sing-beh}.
It turns out that the only special orbits are circles and two-tori.
Describing the flat model associated with each singular orbit enables
us to show that the orbit space \( M/T^4 \) is homeomorphic to a
smooth \( 4 \)-manifold, with a global local homeomorphism induced by
the multi-moment map \( \nu \).
It also follows that the image of the singular orbits in \( M/T^4 \)
consists of trivalent graphs in \( \bR^4 \).

Whilst there are currently no known complete full holonomy
\( \Spin(7) \)-manifolds with a rank four symmetry group, our approach
produces the first known incomplete examples,
see~\S\ref{sec:diag-case}.

\subsection*{Acknowledgements}
AFS was partially supported by the Danish Council for Independent
Research~\textbar~Natural Sciences project DFF - 6108-00358.
Both authors partially supported by the Danish National Research
Foundation grant DNRF95 (Centre for Quantum Geometry of Moduli Spaces
- QGM).

\section{Multi-Hamiltonian
\texorpdfstring{\( \Spin(7) \)}{Spin7}-manifolds}
\label{sec:eff-action}

Let \( M \) be a connected \( 8 \)-manifold.
A \( \Spin(7) \)-structure on \( M \) is determined by a \( 4 \)-form
\( \Phi \) that is pointwise linearly equivalent to the form
\( \Phi_0=e^0\wedge \varphi_0+\Hodge_{\varphi_0}\varphi_0 \), where
\begin{equation*}
  \varphi_0 =
  e^{123}-e^1(e^{45}+e^{67})-e^2(e^{46}+e^{75})-e^3(e^{47}+e^{56});
\end{equation*}
\( E_0,\dots, E_7 \) is a basis of \( V\cong \bR^8 \), and
\( e^0,\dots, e^7 \) is its dual basis of \( V^* \).
Occasionally, we shall refer to the basis \( E_0,\dots, E_7 \) (and
its dual) as being \emph{adapted}.

The \( \GL(V) \)-stabiliser of \( \Phi_0 \) is the compact
\( 21 \)-dimensional Lie group \( \Spin(7)\subset\SO(V) \).
In fact, \( \Phi_0 \) uniquely determines both the inner product
\( g_0 = \sum_{j=0}^7e_j^2 \) and volume element
\( \vol_0 = e^{01234567} \) (see~\cite{Bryant:excep-hol,Spiro:deform-exphol}).
Correspondingly, \( \Phi \)~determines a metric~\( g \) and a volume
form on~\( M \).

Following standard terminology, we say that \( (M,\Phi) \) is a
\emph{\( \Spin(7) \)-manifold} if the \( \Spin(7) \)-structure is
torsion-free, hence the (restricted) holonomy group \( \Hol_0(g) \) is
contained in \( \Spin(7)\subset\SO(8) \).
This implies \( g \) is Ricci-flat.
It is well-known~\cite{Fernandez:Spin7-manf} that being torsion-free,
in this context, is equivalent to the condition that \( \Phi \) is
closed.

We are interested in \( \Spin(7) \)-manifolds that come with an
effective action of a four-torus~\( T^4 \) on~\( M \) that
preserves~\( \Phi \), hence also the metric \( g \).
This furnishes a a Lie algebra anti-homomorphism
\begin{equation}
  \label{eq:inf-act}
  \xi\colon \bR^4\cong\lt^4\to\mathfrak{X}(M).
\end{equation}
In the following, we shall occasionally use \( \xi_p \) to denote the
image of \( \xi \) at \( p \in M \), which is a subspace of
\( T_p M \) of dimension at most~\( 4 \).

\begin{definition}[{\cite[Def.~3.5]{Madsen-S:mmmap2}}]
  Let \( N \) be a manifold equipped with a closed \( 4
  \)-form~\( \alpha \), and \( G \)~an Abelian Lie group acting
  on~\( N \) preserving~\( \alpha \).
  A \emph{multi-moment map} for this action is an invariant map
  \( \nu\colon N \to \Lambda^3\lie g^* \) such that
  \begin{equation*}
    d\inp{\nu}{W}=\xi(W)\hook\alpha,
  \end{equation*}
  for all \( W\in\Lambda^3\lie g \); here
  \( \xi(W)\in\Gamma(\Lambda^3 TM) \) is the unique multi-vector
  determined by~\( W \) via~\( \xi \).
\end{definition}

The \( T^4 \)-action being multi-Hamiltonian for \( \Phi \) implies
that \( \Phi|_{\Lambda^4\xi}\equiv0 \) (cf.\ \cite[Lemma
2.5]{Madsen-S:toric-G2}).
For \( p\in M_0 \), consider an orthonormal
\( X_0,X_1,X_2,X_3 \in \xi_p \) with \( \hat\theta_i \),
\( i=0,1,2,3 \) dual to \( X_0,X_1,X_2,X_3 \):
\( \hat\theta_i(X_j)=\delta_{ij} \) and \( \hat\theta_i(X)=0 \) for
\( X\in\Span{X_0,X_1,X_2,X_3}^\perp \).
Next, let us we denote by \( \alpha_i \) the \( 1 \)-forms
\begin{equation*}
  \alpha_i=(-1)^{i}X_j\wedge X_k\wedge X_\ell\hook\Phi,
\end{equation*}
where \( (ijk\ell)=(0123) \), as cyclic permutations.

As \( \Spin(7) \) acts transitively on the sphere \( S^7 \), we may
take \( X_0=E_0 \) at \( p \).
Now \( \varphi=X_0\hook\Phi \) is a \( \G_2 \)-form, isotropic for
\( X_1,X_2,X_3 \).
Our analysis of \( \G_2 \)-forms~\cite{Madsen-S:toric-G2} shows that
we may take these \( X_{i} \) to be \( E_{5},E_{6},E_{7} \) and so we
get:
\begin{equation}
  \label{eq:Spin7formsa}
  \Phi=\hat\theta_0\wedge \varphi+\Hodge_{\varphi}\varphi,
\end{equation}
where
\begin{gather*}
  \varphi = \alpha_{123} +
  \alpha_1(\alpha_0\hat{\theta}_1-\hat{\theta}_{23}) +
  \alpha_2(\alpha_0\hat\theta_2-\hat\theta_{31})
  + \alpha_3(\alpha_0\hat\theta_3-\hat\theta_{12}),\\
  \Hodge\varphi = \hat\theta_{123}\alpha_0 +
  \alpha_{23}(\alpha_0\hat{\theta}_1-\hat{\theta}_{23}) +
  \alpha_{31}(\alpha_0\hat\theta_2-\hat\theta_{31}) +
  \alpha_{12}(\alpha_0\hat\theta_3-\hat\theta_{12}).
\end{gather*}

Examining the possible isotropy groups, we have the following
surprisingly clean result.

\begin{lemma}
  \label{lem:conn-isotr}
  Suppose \( T^4 \) acts effectively on a manifold \( M \) with
  \( \Spin(7) \)-structure \( \Phi \) so that the orbits are
  isotropic, \( \Phi|_{\Lambda^4\xi_p} = 0 \).
  Then each isotropy group is connected and of dimension at most two;
  hence trivial, a circle or~\( T^2 \).
\end{lemma}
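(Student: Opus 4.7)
My plan is to study the isotropy $H=\Stab_{T^{4}}(p)$ through its linear representation on $T_pM$. Since $T^4$ preserves the metric determined by $\Phi$, an element $h\in H$ acting trivially on $T_pM$ would be an isometry with trivial $1$-jet, hence act trivially on a neighbourhood of $p$ and therefore on connected $M$; effectiveness then forces $h=e$, so $H\hookrightarrow\Spin(7)$ is a faithful representation. Furthermore, because $T^{4}$ is abelian,
\begin{equation*}
  dh\cdot\xi(W)_p=\xi(\Lie{Ad}(h)W)_{hp}=\xi(W)_p
  \qquad\text{for all }h\in H,\;W\in\lt^{4},
\end{equation*}
so $H$ fixes the $(4-k)$-dimensional subspace $\xi_p$ pointwise, where $k=\dim H_0$. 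Consequently $H$ lies inside the pointwise $\Spin(7)$-stabiliser $K$ of $\xi_p$.

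The next step is to identify $K$ by the same adapted-basis normalisations displayed in the excerpt: starting from any $4-k$ orthonormal Killing fields at $p$ and applying the transitivity of $\Spin(7)$ on $S^7$, of $\G_2$ on $S^6$, of $\SU(3)$ on $S^5$ and of $\SU(2)$ on $S^3$ (together with the \G_2-analysis of~\cite{Madsen-S:toric-G2}), I may take $\xi_p$ to be spanned by the first $4-k$ vectors among $E_0,E_5,E_6,E_7$. Reading off the classical stabiliser chain
\begin{equation*}
  \Spin(7)\supset\G_2\supset\SU(3)\supset\SU(2)\supset\{1\},
\end{equation*}
this gives $K\cong\{1\},\SU(2),\SU(3),\G_2,\Spin(7)$ for $k=0,1,2,3,4$ respectively.

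Both conclusions now follow quickly. For $k=4$ and $k=3$, $H_0$ would have to embed $T^{4}$ in $\Spin(7)$ or $T^{3}$ in $\G_2$, contradicting $\rank\Spin(7)=3$ and $\rank\G_2=2$; hence $k\le 2$. In the remaining cases $k\in\{0,1,2\}$, $H_0$ is necessarily a \emph{maximal} torus of $K$, since $\dim H_0=k=\rank K$. Because $H$ is abelian, every $h\in H$ commutes with $H_0$, so $H$ lies in the centraliser of $H_0$ inside $K$; in a connected compact Lie group the centraliser of a maximal torus coincides with that torus, giving $H=H_0$, a torus of dimension $0,1$ or $2$. The principal obstacle I anticipate is making the adapted-basis reduction rigorous at a singular orbit—one must verify, by continuity from nearby free orbits where the condition $\Phi|_{\Lambda^{4}\xi}=0$ is non-vacuous, that $\xi_p$ really does lie in the $\Spin(7)$-orbit of subspaces for which the nested stabiliser chain above applies.
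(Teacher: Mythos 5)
Your proof is correct and follows essentially the same line of reasoning as the paper: both bound the isotropy dimension via the rank of the pointwise stabiliser chain $\Spin(7)\supset\G_2\supset\SU(3)\supset\SU(2)\supset\{1\}$ and then deduce connectedness by identifying $H_0$ as a maximal torus in $\SU(r)$. Your version simply makes explicit what the paper delegates to a reference to \cite{Madsen-S:toric-G2} (the centraliser argument $H\subset C_K(H_0)=H_0$), and the caveat you raise at the end is in fact a non-issue, since for $\dim\xi_p\le 3$ the reduction follows from transitivity of $\Spin(7)$ on low-dimensional Grassmannians without appealing to the isotropy hypothesis, and for $\dim\xi_p=4$ the isotropy condition itself supplies the reduction via the $\G_2$-form analysis.
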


\begin{proof}
  As \( \Spin(7) \) has rank \( 3 \), an isotropy group for
  \( T^{k} \) is of dimension at most~\( 3 \).
  It follows that the \( T^4 \)-orbits are at least one-dimensional.
  In particular, there is always one isotropy invariant direction.
  Hence, the isotropy group is a subgroup of \( \G_2 \).
  But \( \G_{2} \) has rank~\( 2 \), so the isotropy group is at most
  \( 2 \)-dimensional.
  Now as in~\cite{Madsen-S:toric-G2}, the isotropy group is seen to be
  a maximal torus in \( \SU(r) \), \( r=1,2,3 \), so is connected and
  either trivial, a circle or \( T^2 \), as asserted.
\end{proof}

In particular, we have that \( M_{0} \) is the total space of a
principal \( T^{4} \)-bundle.

\section{Toric \texorpdfstring{\( \Spin(7) \)}{Spin7}-manifolds: local
characterisation}
\label{sec:Spin7can-form}

Following the discussion in~\S\ref{sec:eff-action}, we introduce the
following terminology:

\begin{definition}
  \label{def:toric}
  A \emph{toric \( \Spin(7) \)-manifold} is a torsion-free
  \( \Spin(7) \)-manifold \( (M,\Phi) \) with an effective
  multi-Hamiltonian action of \( T^4 \).
\end{definition}

The main aim of this section is to derive a Gibbons-Hawking type
ansatz~\cite{Gibbons-Hawking1,Gibbons-Hawking2} for toric
\( \Spin(7) \)-manifolds: we obtain a local form for a toric
\( \Spin(7) \)-structure on \( M_{0} \) and characterise the
torsion-free condition in these terms.

So assume \( (M,\Phi) \) is a toric \( \Spin(7) \)-manifold.
Let \( U_0,U_1,U_2,U_3 \) be infinitesimal generators for the
\( T^4 \)-action; these give a basis for \( \xi_p\leqslant T_p M \)
for each~\( p\in M_0 \).
Denote by \( \theta = (\theta_0,\theta_1,\theta_2,\theta_3)^t \) the
dual basis of \( \xi_p^*\leqslant T_p^*M \):
\begin{equation*}
  \theta_i(U_j)=\delta_{ij} \eqand
  \theta(X)=0 \eqcond[1]{for all \( X\perp U_0,U_1,U_2,U_3 \).}
\end{equation*}
As shorthand notation, we shall write \( \theta_{ab} \) for
\( \theta_a \wedge \theta_b \), and so forth.

Let \( \nu = (\nu_0,\nu_1,\nu_2,\nu_3)^t \) be the associated
multi-moment map; its components satisfy
\begin{equation*}
  d\nu_i
  = (-1)^{i}U_j\wedge U_k\wedge U_{\ell}\hook\Phi
  = (-1)^{i}(U_j\times U_k\times U_\ell)^\flat,
  \qquad(ijk\ell)=(0123).
\end{equation*}
It follows that \( d\nu \) has full rank on~\( M_0 \) and induces a
local diffeomorphism \( M_0/T^4\to\bR^4 \).
We define a \( 4\times4 \)-matrix \( B \) of inner products given by
\begin{equation*}
  B_{ij}=g(U_i,U_j).
\end{equation*}
On~\( M_0 \) we set \( V=B^{-1}=\det(B)^{-1}\adj(B) \).

Using the above notation, we have the following local expression for
the \( \Spin(7) \)-structure:

\begin{proposition}
  \label{prop:Spin7-can-form}
  On \( M_0 \), the \( 4 \)-form \( \Phi \) is
  \begin{equation*}
    \begin{split}
    \Phi &= \det(V)\sumcic_{ijk\ell}(-1)^{i}\theta_i\wedge
    d\nu_{jk\ell}+
      \sumcic_{ijk\ell}(-1)^{\ell}\theta_{ijk}\wedge d\nu_{\ell}
      \eqbreak
    +\tfrac1{2\det(V)}(d\nu^t\adj(V)\theta)^2.
    \end{split}
  \end{equation*}
  The associated \( \Spin(7) \)-metric is given by
  \begin{equation}
    \label{eq:Spin7-metr}
    g = \tfrac1{\det(V)}\theta^t\adj(V)\theta+d\nu^t\adj(V)d\nu.
  \end{equation}
\end{proposition}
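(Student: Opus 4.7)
The strategy is to verify both assertions pointwise on \( M_0 \) by pushing the normal form \eqref{eq:Spin7formsa} through the basis change that relates the generators \( U_i \) to an adapted orthonormal frame.

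Fix \( p \in M_0 \). Since \( \Spin(7) \) acts transitively on the space of adapted orthonormal bases, we pick \( X_0,\ldots, X_3 \in \xi_p \) orthonormal, with dual \( \hat\theta_a \), realising \eqref{eq:Spin7formsa}. As \( \Phi \) determines \( g \), the \( \hat\theta_a,\alpha_a \) jointly form an orthonormal coframe at \( p \), with the \( \alpha_a \) spanning \( \xi_p^\perp \) (they annihilate \( \xi_p \) by the isotropy condition \( \Phi|_{\Lambda^4\xi} = 0 \)). Writing \( U_i = \sum_a A_{ia} X_a \) for some \( A = A(p) \in \GL(4,\bR) \), duality gives \( \hat\theta = A^t\theta \); orthonormality of the \( X_a \) gives
\begin{equation*}
  B = AA^t,\quad V = A^{-t}A^{-1},\quad \det V = (\det A)^{-2},\quad \adj(V) = (\det V)\,B,
\end{equation*}
and \( U_0 \wedge\cdots\wedge U_3 = \det(A)\, X_0\wedge\cdots\wedge X_3 \) converts the definition of \( d\nu_i \) into \( d\nu = \det(A)\,A^{-t}\alpha \), equivalently \( \alpha = (\det A)^{-1}A^t\,d\nu \).

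The expression for \( \Phi \) is verified bidegree-by-bidegree with respect to vertical \( \hat\theta \)'s and horizontal \( \alpha \)'s. From \eqref{eq:Spin7formsa}, the \( (4,0) \)-piece vanishes by isotropy, while the \( (1,3) \)-, \( (3,1) \)- and \( (2,2) \)-pieces equal, respectively,
\begin{equation*}
  \sum_a (-1)^a \hat\theta_a \wedge \alpha_{bcd},\qquad -\sum_a (-1)^a \alpha_a \wedge \hat\theta_{bcd},\qquad \tfrac{1}{2}(\alpha^t \hat\theta)^2,
\end{equation*}
with \( (abcd) \) cyclic. The Laplace-type identity
\begin{equation*}
  (-1)^i\, d\nu_j \wedge d\nu_k \wedge d\nu_\ell = (\det A)^2 \sum_a (-1)^a A_{ia}\, \alpha_{bcd},
\end{equation*}
obtained from \( d\nu = \det(A)A^{-t}\alpha \) together with \( \adj(\det(A)A^{-t}) = (\det A)^2 A^t \), combined with \( \hat\theta_a = \sum_i A_{ia}\theta_i \), turns the first two pieces into the first two summands of the stated formula. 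For the \( (2,2) \)-piece, the direct calculation \( \alpha^t \hat\theta = \det(A)\, d\nu^t \adj(V)\, \theta \) (using \( \adj(V) = (\det A)^{-2} B \)) yields \( \tfrac12 (\alpha^t\hat\theta)^2 = \tfrac{1}{2\det V}(d\nu^t\adj(V)\theta)^2 \), the last summand.

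The metric formula then follows from the orthogonal decomposition \( g = g^{\mathrm{vert}} + g^{\mathrm{hor}} \): since \( g(U_i,U_j) = B_{ij} \), one has \( g^{\mathrm{vert}} = \theta^t B\theta = (\det V)^{-1}\theta^t\adj(V)\theta \); and since the \( \alpha_a \) are an orthonormal horizontal coframe, \( g^{\mathrm{hor}} = \alpha^t\alpha = (\det A)^{-2}\,d\nu^t B\,d\nu = d\nu^t\adj(V)\,d\nu \). The main obstacle in this approach is the sign and cyclic-index bookkeeping in the \( (1,3) \)- and \( (3,1) \)-pieces; splitting the computation by bidegree and encoding the index manipulation in the adjugate identity above reduces it to routine matrix algebra.
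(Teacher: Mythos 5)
Your proof is correct and takes essentially the same route as the paper: push the normal form \eqref{eq:Spin7formsa} through the basis change relating $U_i$ to an orthonormal $X_a$, organise the computation by vertical/horizontal bidegree, and invoke the $\Lambda^3$--adjugate identity to convert $\alpha,\hat\theta$ into $d\nu,\theta$. The only cosmetic difference is that you work with an arbitrary $A\in\GL(4,\bR)$ satisfying $U = AX$ (hence $B = AA^t$), whereas the paper fixes the symmetric square root $A = V^{1/2}$ with $X = AU$; the algebra is otherwise identical.
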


\begin{proof}
  We start by choosing an auxiliary symmetric matrix \( A>0 \) such
  that \( A^2=B^{-1}=V \) which is possible as \( B \) is positive
  definite.
  Then we set \( X_i=\sum_{j=0}^3A_{ij}U_j \) and observe that
  \begin{equation*}
    g(X_i,X_j)=(ABA)_{ij}=(A^2B)_{ij}=\delta_{ij},
  \end{equation*}
  showing that the quadruplet \( (X_0,X_1,X_2,X_3) \) is orthonormal.
  It follows that we can apply the formula~\eqref{eq:Spin7formsa} for
  \( \Phi \).  Explicitly,
  \begin{equation}
    \label{eq:Spin7-exp}
    \begin{split}
      \Phi
      &= \hat\theta_0\wedge\alpha_{123}
        - \hat\theta_1\wedge\alpha_{230}
        + \hat\theta_2\wedge\alpha_{301}
        - \hat\theta_3\wedge\alpha_{012}
        \eqbreak
        + \hat\theta_{123}\wedge\alpha_0
        - \hat\theta_{230}\wedge\alpha_1
        + \hat\theta_{301}\wedge\alpha_2
        - \hat\theta_{012}\wedge\alpha_3
        \eqbreak
        - \hat\theta_{01}\wedge\alpha_{01}
        - \hat\theta_{02}\wedge\alpha_{02}
        - \hat\theta_{30}\wedge\alpha_{30}
        \eqbreak
        - \hat\theta_{23}\wedge\alpha_{23}
        - \hat\theta_{31}\wedge\alpha_{31}
        - \hat\theta_{12}\wedge\alpha_{12}.
    \end{split}
  \end{equation}
  We make the identification \( \bR^4\cong \Lambda^3\bR^4 \) via
  contraction with the standard volume form.
  Then by letting \( \Lambda^3A \) denote the induced action of \( A \)
  on \( \Lambda^3\bR^4 \), we get the identity
  \begin{equation*}
    \Lambda^3A = \det(A)A^{-1}.
  \end{equation*}
  As a result, we get
  \begin{equation*}
    \alpha=(\Lambda^3A)d\nu \eqand
    \hat\theta=A^{-1}\theta=\tfrac1{\det(A)}(\Lambda^3A)\theta.
  \end{equation*}
  The asserted formula for \( \Phi \) then follows as the first line
  of~\eqref{eq:Spin7-exp} equals
  \( \det(V)\sumcic_{ijk\ell}(-1)^i\theta_i\wedge d\nu_{jk\ell}, \)
  the second line reads
  \( \sumcic_{ijk\ell}(-1)^\ell\theta_{ijk}\wedge d\nu_{\ell} \) and
  the third line may be expressed as
  \( \tfrac1{2\det(V)}(d\nu^t\adj(V)\theta)^2 \).

  Now the expression from the metric follows by direct
  computation:
  \begin{equation*}
    \begin{split}
      g
      &= \hat\theta^t\hat\theta+\alpha^t\alpha
        = (A^{-1}\theta)^t A^{-1}\theta
        + (\Lambda^2Ad\nu)^t\Lambda^2Ad\nu\\
      &=\theta^t\bigl(\tfrac1{\det(V)}\adj(V)\bigr)\theta
        + d\nu^t\adj(V)d\nu.
        \qedhere
    \end{split}
  \end{equation*}
\end{proof}

We remark that there is a natural action of \( \GL(4,\bR) \),
corresponding to changing basis of \( \lt^4 \).
This action can be useful when looking for invariants, up to scaling,
and may also be used to simplify arguments as it allows us to assume
that \( V \) is diagonal or the identity matrix at a given point,
assuming only the \( \bR^4 = \widetilde{T^4} \) action is of
relevance.

\subsection{The torsion-free condition}
\label{sec:torsion-free}

The \( \Spin(7) \)-structure featuring in
Proposition~\ref{prop:Spin7-can-form} is generally not torsion-free.
To address this, we need to compute \( d\Phi \), which involves
determining the exterior derivative of \( \theta \).
By our observations in~\S\ref{sec:eff-action}, we may think of
\( \theta \) as a connection \( 1 \)-form and its exterior derivative
\begin{equation*}
  d\theta=\omega=(\omega_0,\omega_1,\omega_2,\omega_3)^t
\end{equation*}
is therefore a curvature \( 2 \)-form (and so represents an integral
cohomology class).
In terms of our parameterisation for the base space, via the
multi-moment map, we can express the curvature components of
\( \omega \) as
\begin{equation*}
  \omega_\ell=\sum_{0\leqslant i<j\leqslant3}z^{ij}_\ell d\nu_{ij}.
\end{equation*}
We collect the curvature coefficients in four skew \( 4\times 4 \)
matrices \( Z_\ell=(z^{ij}_\ell) \).

Closedness of \( \Phi \) implies that the curvature matrices
\( Z_\ell \) are determined via \( V \) and \( dV \).
In addition, the following equations must hold
\begin{equation}
  \label{eq:div-free}
  \sum_{i=0}^3 \D{V_{ij}}{\nu_i} = 0,\qquad j=0,1,2,3.
\end{equation}
We refer to this first order underdetermined elliptic PDE system as
the \enquote{divergence-free} condition.

The explicit expressions for the curvature coefficients are
\begin{gather}
  \label{eq:Z-coeff-a}
  z^{\ell i}_\ell= \sum_{p=0}^3V_{pj}\D{V_{\ell
  k}}{\nu_p}-V_{pk}\D{V_{\ell j}}{\nu_p}, \\
  \label{eq:Z-coeff-b}
  \begin{split}
    z^{ij}_\ell
    &= V_{\ell k}\D{V_{\ell i}}{\nu_i}
      + V_{\ell k}\D{V_{\ell j}}{\nu_j}
      + V_{\ell k}\D{V_{\ell k}}{\nu_k}
      + \sum_{p=0}^3V_{\ell p}\D{V_{\ell k}}{\nu_p}
      \eqbreak
      - V_{ik}\D{V_{\ell\ell}}{\nu_i}
      - V_{jk}\D{V_{\ell\ell}}{\nu_j}
      - V_{kk}\D{V_{\ell\ell}}{\nu_k},
  \end{split}
\end{gather}
where if \( \ell=0 \) then \( (ijk)=(123) \); if \( \ell=1 \) then
\( (ijk)=(320) \); if \( \ell=2 \) then \( (ijk)=(013) \); if
\( \ell=3 \) then \( (ijk)=(021) \).

There are exactly \( 10 \) additional equations, arising from the
condition \( d\omega=0 \).
Using~\eqref{eq:div-free}, these equations can be expressed in the
form of a second order quasilinear elliptic PDE without zeroth order
terms:
\begin{equation}
  \label{eq:elliptic}
  L(V) + Q(dV) = 0.
\end{equation}
In the above, the operator \( L \) is given by:
\begin{equation*}
  L=\sum_{i,j=0}^{3}V_{ij}\Dsqm{}{\nu_i}{\nu_j}.
\end{equation*}
So \( L \) has the same principal symbol as the Laplacian for the
metric \( d\nu^t B d\nu \), which is conformally the same as the
restriction of the \( \Spin(7) \)-metric~\eqref{eq:Spin7-metr} to the
horizontal space.

The operator \( Q \) is the quadratic form in \( dV \) that is given
explicitly by
\begin{equation*}
  \begin{split}
    Q(dV)_{ii}
    &=-2 \D{V_{ij}}{\nu_i}\D{V_{ii}}{\nu_j}
      - 2\biggl(\D{V_{ij}}{\nu_j}\biggr)^2
      - 2\D{V_{ik}}{\nu_i}\D{V_{ii}}{\nu_k}
      - 2\D{V_{ij}}{\nu_k}\D{V_{ki}}{\nu_j}
      \eqbreak
      - 2\D{V_{ij}}{\nu_j}\D{V_{ki}}{\nu_k}
      - 2\biggl(\D{V_{ik}}{\nu_k}\biggr)^2
      - 2\D{V_{ii}}{\nu_\ell}\D{V_{\ell i}}{\nu_i}
      - 2\D{V_{ij}}{\nu_\ell}\D{V_{\ell i}}{\nu_j}
      \eqbreak
      - 2\D{V_{ik}}{\nu_\ell}\D{V_{\ell i}}{\nu_k}
      - 2\D{V_{ij}}{\nu_j}\D{V_{\ell i}}{\nu_\ell}
      - 2\D{V_{ik}}{\nu_k}\D{V_{\ell i}}{\nu_\ell}
      - 2\biggl(\D{V_{i\ell}}{\nu_\ell}\biggr)^2,
  \end{split}
\end{equation*}
where \( (ijk\ell)=(0123) \), and
\begin{equation*}
  \begin{split}
    \MoveEqLeft[0]
    Q(dV)_{ij}\\
    &=\D{V_{ij}}{\nu_i}\D{V_{ji}}{\nu_j}
      + \D{V_{ij}}{\nu_i}\D{V_{ik}}{\nu_k}
      + \D{V_{ij}}{\nu_i}\D{V_{i\ell}}{\nu_\ell} %
      + \D{V_{ij}}{\nu_j}\D{V_{jk}}{\nu_k}
      + \D{V_{ij}}{\nu_j}\D{V_{j\ell}}{\nu_\ell}
      - \D{V_{ij}}{\nu_k}\D{V_{ki}}{\nu_i} \eqbreak[1]
      - \D{V_{ij}}{\nu_\ell}\D{V_{\ell i}}{\nu_i}
      - \D{V_{ii}}{\nu_j}\D{V_{jj}}{\nu_i}
      - \D{V_{ik}}{\nu_j}\D{V_{jj}}{\nu_k} %
      - \D{V_{i\ell}}{\nu_j}\D{V_{jj}}{\nu_\ell}
      - \D{V_{ii}}{\nu_k}\D{V_{jk}}{\nu_i}
      - \D{V_{ij}}{\nu_k}\D{V_{jk}}{\nu_j} \eqbreak[1]
      - \D{V_{ik}}{\nu_k}\D{V_{jk}}{\nu_k}
      - \D{V_{i\ell}}{\nu_k}\D{V_{jk}}{\nu_\ell}
      - \D{V_{ii}}{\nu_\ell}\D{V_{j\ell}}{\nu_i} %
      - \D{V_{ij}}{\nu_\ell}\D{V_{j\ell}}{\nu_j}
      - \D{V_{ik}}{\nu_\ell}\D{V_{j\ell}}{\nu_k}
      - \D{V_{i\ell}}{\nu_\ell}\D{V_{j\ell}}{\nu_\ell},
  \end{split}
\end{equation*}
where \( i,j,k,\ell\in\Set{0,1,2,3} \) are distinct numbers.

In summary, we see that the torsion-free condition determines the
curvature matrices \( Z_\ell \) together with four first order
equations and ten second order equations.
Hence, we have the following way to locally characterise toric
\( \Spin(7) \)-manifolds.

\begin{theorem}
  \label{thm:toric-Spin7-charac}
  Any toric \( \Spin(7) \)-manifold can be expressed in the form of
  Proposition~\ref{prop:Spin7-can-form} on the open dense subset of
  principal orbits for the \( T^4 \)-action.

  Conversely, given a principal \( T^4 \)-bundle over an open subset
  \(\cU\subset \bR^4 \), parameterised by
  \( \nu=(\nu_0,\nu_1,\nu_2,\nu_3) \), together with
  \( V\in\Gamma(\cU,S^2(\bR^4)) \) that is positive definite at each
  point.
  Then the total space comes equipped with a \( \Spin(7) \)-structure
  of the form given in Proposition~\ref{prop:Spin7-can-form}.
  This structure is torsion-free, hence toric, if and only if the
  curvature matrices \( Z_\ell \) are determined by~\( V \) via
  \eqref{eq:Z-coeff-a} and~\eqref{eq:Z-coeff-b}, respectively, and
  \( V \)~satisfies the divergence-free condition~\eqref{eq:div-free}
  together with the quasilinear second order elliptic
  system~\eqref{eq:elliptic}.  \qed
\end{theorem}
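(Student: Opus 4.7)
The first assertion is immediate: on the regular set \( M_0 \), toric \( \Spin(7) \)-manifolds are parameterised as in Proposition~\ref{prop:Spin7-can-form}, with \( V=B^{-1} \) positive definite by construction. The content of the theorem lies in the equivalence between closedness of the \( 4 \)-form \( \Phi \) produced by the ansatz and the stated PDE system, together with the attendant identification of the curvature coefficients \( z^{ij}_\ell \).

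My plan is to compute \( d\Phi \) by bidegree, counting vertical factors \( \theta_a \) and horizontal factors \( d\nu_i \) separately. Each \( V_{ij} \) is a function of the base coordinates \( \nu \), so its differential is horizontal; \( \theta \) is a connection \( 1 \)-form whose curvature \( \omega = d\theta \) is horizontal, of bidegree \( (0,2) \); and both base and fibre are \( 4 \)-dimensional. Hence the only possibly non-vanishing bidegrees of \( d\Phi \) are \( (1,4) \) and \( (2,3) \), while the three summands of \( \Phi \) from Proposition~\ref{prop:Spin7-can-form} have bidegrees \( (1,3) \), \( (3,1) \) and \( (2,2) \), which organises the bookkeeping.

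I would then extract the content of \( d\Phi = 0 \) bidegree by bidegree. The \( (1,4) \)-component comes only from the \( (1,3) \)-summand, and as the horizontal top form is spanned by \( d\nu_0\wedge\cdots\wedge d\nu_3 \), collecting the coefficient of each \( \theta_a \) yields the four first-order equations~\eqref{eq:div-free}. The \( (2,3) \)-component collects contributions from the \( (3,1) \)- and \( (2,2) \)-summands and involves both \( \omega \) and horizontal derivatives of~\( V \); sorting by the pair \( \theta_{ab} \) and the omitted \( d\nu_m \), the \( z^{ij}_\ell \) enter linearly and can be solved for uniquely in terms of \( V \) and \( dV \), yielding the formulas~\eqref{eq:Z-coeff-a}--\eqref{eq:Z-coeff-b}. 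Because \( \theta \) is a genuine connection, \( d\omega = 0 \) is automatic, but once these formulas are substituted it becomes a nontrivial condition on~\( V \); expanding and using~\eqref{eq:div-free} to clear redundancies should produce the ten second-order equations packaged in~\eqref{eq:elliptic}, whose principal symbol agrees with that of the Laplacian of the horizontal part of the \( \Spin(7) \)-metric up to conformal scale.

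For the converse I would reverse the construction: given \( V \) on \( \cU \) satisfying~\eqref{eq:div-free} and~\eqref{eq:elliptic}, define \( 2 \)-forms \( \omega_\ell \) by~\eqref{eq:Z-coeff-a}--\eqref{eq:Z-coeff-b}, observe that~\eqref{eq:elliptic} is precisely \( d\omega=0 \) modulo the first-order equations, pick a connection \( \theta \) on the principal \( T^4 \)-bundle with \( d\theta = \omega \), and build \( \Phi \) by the formula of Proposition~\ref{prop:Spin7-can-form}. The bidegree analysis above, read in reverse, then gives \( d\Phi = 0 \). The principal obstacle is the \( (2,3) \) computation: inverting the linear system to read off the \( z^{ij}_\ell \) and then re-expanding \( d\omega = 0 \) in the entries of \( V \) is algebraically heavy, and the emergence of a geometrically meaningful principal symbol only becomes apparent once the full calculation has been carried through.
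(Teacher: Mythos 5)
Your high-level plan is the right one and matches the paper's approach: decompose $d\Phi$ by vertical/horizontal bidegree $(p,q)$, observe that only $(1,4)$ and $(2,3)$ can occur, solve the resulting linear system for the curvature coefficients $z^{ij}_\ell$, and then use the Bianchi-type identity $d\omega=0$ to extract the residual second-order equations, simplified by~\eqref{eq:div-free}. For the converse the reversal you describe is also essentially what the paper implicitly invokes.

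However, there is a genuine error in your bidegree bookkeeping. You claim \emph{``the $(1,4)$-component comes only from the $(1,3)$-summand.''} This is false. Writing $\sigma = d\nu^t\adj(V)\theta$ so that the $(2,2)$-summand is $\tfrac1{2\det V}\sigma^2$, one has $d(\sigma^2)=2\sigma\wedge d\sigma$, and $d\sigma$ contains a $(0,3)$-piece $-\sum_{a,b}(\adj V)_{ab}\,d\nu_a\wedge\omega_b$ coming from $d\theta_b=\omega_b$. Wedged with $\sigma$ this lands in bidegree $(1,4)$. So the $(1,4)$-component of $d\Phi$ receives contributions \emph{both} from the horizontal derivative of $\det(V)$ in the $(1,3)$-summand \emph{and} from the curvature $\omega$ entering through the $(2,2)$-summand. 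If you drop the latter, the coefficient of $\theta_a\wedge d\nu_{0123}$ reduces to (a multiple of) $\partial\det V/\partial\nu_a$, and setting these to zero gives $\det V$ constant, which is neither~\eqref{eq:div-free} nor correct. In reality one must first solve the $(2,3)$-system for the $z^{ij}_\ell$, substitute into the $(1,4)$-component, and only then read off the divergence-free condition~\eqref{eq:div-free}; the paper's phrasing (``Closedness of $\Phi$ implies that the curvature matrices $Z_\ell$ are determined via $V$ and $dV$. In addition, the following equations must hold'') reflects this coupling. Your step as stated would therefore not produce~\eqref{eq:div-free}, and the downstream claim that~\eqref{eq:elliptic} then drops out after ``clearing redundancies'' rests on this incorrect attribution.

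A secondary caution: you assert $d\omega=0$ is ``automatic'' since $\theta$ is a connection. That is true for the forward direction, but for the converse one is \emph{defining} $\omega$ by~\eqref{eq:Z-coeff-a}--\eqref{eq:Z-coeff-b} and must check that $\omega$ is closed before one can find a connection $\theta$ with $d\theta=\omega$; that is precisely where~\eqref{eq:elliptic} (using~\eqref{eq:div-free}) is needed, and in addition $[\omega]$ must be integral for the bundle to exist. Your last paragraph has the logic the right way round, but it is worth making the integrality requirement explicit.
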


To conclude this section, we remark that it is possible to integrate
the divergence-free equations~\eqref{eq:div-free} to obtain a
potential.  However, the correspondence is not elliptic.

\begin{proposition}
  Assume that \( V\in\Gamma(\cU,S^2(\bR^4)) \) satisfies the
  divergence-free equations~\eqref{eq:div-free}, with
  \( \cU\subset\bR^4 \) simply connected.
  Then there exists a matrix function \( A\in\Gamma(\cU,M_6(\bR)) \)
  whose second derivatives determine \( V \).
  More precisely, indexing \( \bR^{6} = \Lambda^{2}\bR^{4} \) by
  \( ij = i \wedge j \), for \( i,j \in \Set{0,1,2,3} \), there is an
  \( A_{ij,k\ell} \) satisfying
  \( A_{ij,\Hodge k\ell} = A_{k\ell,\Hodge ij} \) such that
  \begin{equation}
    \label{eq:potential}
    V_{ab}
    = \sum_{k,\ell=0}^{3} \Dsqm{A_{ak,\Hodge b\ell}}{\nu_{k}}{\nu_{\ell}}.
  \end{equation}
\end{proposition}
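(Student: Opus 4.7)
The plan is to apply the Poincar\'e lemma twice on $\cU$, leveraging the vanishing of de Rham cohomology (we work locally on contractible subsets and globalise via the simple connectedness hypothesis). The approach parallels the classical construction of a Beltrami-type stress function for a doubly divergence-free symmetric tensor, here adapted to four dimensions using Hodge duality on $\Lambda^{2}\bR^{4}$.

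For each fixed $b$, regard $V^{b} := \sum_{a} V_{ab}\,d\nu_{a}$ as a $1$-form on~$\cU$. Equation~\eqref{eq:div-free} says exactly $d(\Hodge V^{b}) = 0$, so the Poincar\'e lemma furnishes a $2$-form $\eta^{b}$ with $V_{ab} = \sum_{c} \partial\eta^{b}_{ac}/\partial\nu_{c}$, where $\eta^{b}_{ac}$ is antisymmetric in $(ac)$. The symmetry $V_{ab} = V_{ba}$ then yields the compatibility $\sum_{c} \partial_{c}(\eta^{b}_{ac} - \eta^{a}_{bc}) = 0$, and a second application of Poincar\'e produces $\sigma_{ab,cd}$, antisymmetric in $(ab)$ and in $(cd)$, with $\eta^{b}_{ac} - \eta^{a}_{bc} = \sum_{d} \partial\sigma_{ab,cd}/\partial\nu_{d}$.

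The two antisymmetric pair indices of $\sigma$ sit naturally in $\Lambda^{2}\bR^{4} \otimes \Lambda^{2}\bR^{4}$, so the Hodge star on $\Lambda^{2}\bR^{4}$ provides a canonical repackaging. Exploiting the gauge freedom $\eta^{b} \mapsto \eta^{b} + d\beta^{b}$ from the first step, together with the analogous freedom for $\sigma$ in the second, one arranges $\sigma$ (after one Hodge twist) to be symmetric under pair exchange --- which is precisely the condition $A_{ij,\Hodge k\ell} = A_{k\ell,\Hodge ij}$. Substituting the resulting expression for $\eta$ into $V_{ab} = \sum_{c} \partial_{c}\eta^{b}_{ac}$ then produces~\eqref{eq:potential}.

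The principal technical obstacle lies here: the second Poincar\'e application only captures the $(ab)$-antisymmetric part of $\eta^{b}_{ac}$, whereas $V$ depends on the full $\eta$. Reconstructing the symmetric-in-$(ab)$ part of $\eta$ as a second derivative of a single $A$ requires a compatible choice of gauge across both reductions; tracking these gauge ambiguities and verifying the resulting Hodge-symmetry is the heart of the argument. The non-ellipticity remarked on after the statement is explained by the fact that the formula~\eqref{eq:potential} involves a contraction over only two of the four index pairs carried by $A$, so the linear map $A \mapsto V$ has a large kernel consisting of the gauge freedoms described above.
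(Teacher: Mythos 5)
Your overall strategy — integrate the divergence-free condition with the Poincar\'e lemma, use the symmetry of \(V\) to get a second closed form, integrate again, and recognise the result as a Beltrami-type stress potential — is exactly the paper's route. But you stop at what you call the ``principal technical obstacle'' and, more seriously, you mischaracterise what the obstacle is and how it is resolved. You claim that the second Poincar\'e application ``only captures the \((ab)\)-antisymmetric part of \(\eta^b_{ac}\)'' and that recovering the rest ``requires a compatible choice of gauge across both reductions.'' Neither part of this is right.

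The antisymmetrisation does not lose any information: a tensor \(T_{abc}\) satisfying both \(T_{abc}=T_{bac}\) and \(T_{abc}=-T_{acb}\) is identically zero (chase the two symmetries around a cycle to get \(T_{abc}=-T_{abc}\)). Consequently the map \(\eta^b_{ac}\mapsto T^{ab}_c:=\eta^b_{ac}-\eta^a_{bc}\) is injective on the space of tensors antisymmetric in \((ac)\), and it has the explicit left inverse
\begin{equation*}
  \eta^a_{bc}=-\tfrac12\bigl(T^{ab}_c-T^{bc}_a+T^{ca}_b\bigr).
\end{equation*}
This is precisely the role of the identity \(2W_{i,\Hodge pq}=\tW_{pq,i}-\tW_{qi,p}-\tW_{ip,q}\) in the paper: the Hodge-twisted antisymmetrisation \(\tW_{pq,i}=W_{q,\Hodge pi}-W_{p,\Hodge qi}\) is a linear bijection, invertible by a cyclic sum, so ``\(\tW\) determines \(W\) uniquely.'' No gauge-fixing is invoked anywhere, and none is needed. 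Substituting the inverse formula and then \(T^{ab}_c=\sum_d\partial\sigma_{ab,cd}/\partial\nu_d\) into \(V_{ab}=\sum_c\partial\eta^b_{ac}/\partial\nu_c\) gives \(V_{ab}=\tfrac12\sum_{c,d}\partial^2(\sigma_{ac,bd}+\sigma_{bd,ac})/\partial\nu_c\partial\nu_d\) directly — only the pair-symmetric part of \(\sigma\) survives, which after the Hodge repackaging is exactly the condition \(A_{ij,\Hodge k\ell}=A_{k\ell,\Hodge ij}\). So: the gap in your write-up is the missing cyclic inversion; the gauge-tracking programme you propose instead is neither what the paper does nor a productive way to close the gap. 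Your closing remark about the non-ellipticity and the kernel of \(A\mapsto V\) is fine as a heuristic but is not part of the proof.
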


\begin{proof}
  We begin by noting that the divergence-free equation can be written
  more concisely as
  \begin{equation*}
    d \Hodge_4 (Vd\nu) = 0,
  \end{equation*}
  where \( \nu=(\nu_0,\nu_1,\nu_2,\nu_3)^t \) and \( \Hodge_4 \) is
  the flat Hodge star operator with respect to the
  \( \nu \)-coordinates.
  As \(\cU \) is simply connected, we deduce that \( \Hodge_4 Vd\nu \)
  is exact, i.e.,
  \begin{equation*}
    Vd\nu = \Hodge_4 d(W \kappa)
  \end{equation*}
  for some \( W\in \Gamma(\cU,M_{4\times6}(\bR)) \) and
  \( \kappa=(d\nu_{01},d\nu_{02},\dots,d\nu_{23})^t \).

  Now, using the symmetry of \( V \), we find that
  \( \tW\in \Gamma(\cU,M_{6\times4}(\bR)) \) given by \(
  \tW_{pq,i} = W_{q,\Hodge pi} - W_{p,\Hodge qi}\)  satisfies:
  \begin{equation*}
    d\Hodge_4 (\tW d\nu) = 0.
  \end{equation*}
  Note \( 2W_{i,\Hodge
  pq}  = \tW_{pq,i} - \tW_{qi,p} - \tW_{ip,q} \), so \( \tW \)
  determines~\( W \) uniquely.
  As before, the differential equation can be integrated.
  Indeed, we can find a section \( A\in\Gamma(\mathcal U,M_6(\bR)) \)
  such that \( \tW d\nu = \Hodge_4d(A \kappa) \).

  In conclusion, \( V \) can be expressed in terms of the second
  derivatives of the entries of~\( A \), with the explicit expressions
  given by~\eqref{eq:potential}.
\end{proof}

\subsection{Natural PDEs}
\label{sec:natural-eq}

We have already remarked that in our description of toric
\( \Spin(7) \)-manifolds there is an action of \( \GL(4,\bR) \)
corresponding to a different choice of generators for
\( \lt^{4} \cong \bR^{4} \).
As for toric \( \G_2 \)-manifolds
(cf.~\cite[\S3.2]{Madsen-S:toric-G2}), this action furnishes a way of
approaching equation~\eqref{eq:elliptic}, by understanding how the
operators \( L \) and \( Q \) transform under \( \GL(4,\bR) \).

It is useful use the identification
\( \GL(4,\bR)\cong(\bR^{\times}\times\SL(4,\bR))/\bZ_{2} \), where
\( \bZ_{2} \) is generated by \( -1_{4} \), and accordingly express
irreducible representations as \( \ell^k\Gamma_{a,b,c} \), where
\( \Gamma_{a,b,c} \) is an irreducible representation of
\( \SL(4,\bR) \), and \( \ell \) is the standard one-dimensional
representation of \( \bR^{\times}\to \bR\setminus\Set{0} \):
\( t \mapsto t \).
As an example, this means that we have for \( p\in M_{0} \) that
\( \xi_p=\ell^1\Gamma_{0,0,1} \).

Now let \( U = (\bR^4)^* = \ell^{-1}\Gamma_{1,0,0} \), viewed as a
representation of \( \GL(4,\bR) \).
Then \( V \in S^2(U)=\ell^{-2}\Gamma_{2,0,0} \).
The collection of first order partial derivatives
\( V^{(1)} = (V_{ij,k}) = (\partial V_{ij}/\partial\nu_k) \) is then
an element of
\( S^2(U)\otimes \ell^{-4} U^* = \ell^{-5}\Gamma_{2,0,0} \otimes
\Gamma_{0,0,1} \), since \( d\nu \) transforms as an element of
\( \Lambda^{3}U^{*} = \ell^{3}\Gamma_{1,0,0} = \ell^{4}U \).
This tensor product decomposes as
\begin{equation*}
  S^2(U)\otimes \ell^{-4} U^* = \ell^{-5} \Gamma_{1,0,0} \oplus
  \ell^{-5} \Gamma_{2,0,1},
\end{equation*}
with the projection to \( \Gamma_{1,0,0} \) being given by the
contraction
\( S^2(\Gamma_{1,0,0})\otimes \Gamma_{0,0,1}\to \Gamma_{1,0,0} \), and
\( \Gamma_{2,0,1} \) denoting the kernel of this map.
The divergence-free equation~\eqref{eq:div-free} simply says that this
contraction is zero, and so
\( V^{(1)} \in \ell^{-5} \Gamma_{2,0,1} \).

The operator \( Q \) is a symmetric quadratic operator on
\( V^{(1)} \) with values in \( S^2(U) \).
Hence, we may think of \( Q(dV) \) as an element of the space
\( \ell^{8}S^2(\Gamma_{2,0,1})^{*} \otimes S^2(\Gamma_{1,0,0}) \).
This space contains exactly one submodule that is trivial as an
\( \SL(4,\bR) \)-module, since \( S^2(\Gamma_{1,0,0})^{*} \) is a
submodule of \( S^2(\Gamma_{2,0,1})^{*} \).
Direct computations show that \( Q(dV) \) belongs to~\( \ell^{8} \).

In a similar way, we can address the second order terms
in~\eqref{eq:elliptic}.
We have
\( V^{(2)} = (V_{ij,k\ell}) \in R = (S^2(U) \otimes S^2(\ell^{-4}U^*))
\cap (\ell^{-8}\Gamma_{2,0,1}\otimes \Gamma_{0,0,1}) =
\ell^{-8}\Gamma_{1,0,1} + \ell^{-8}\Gamma_{2,0,2}\).
Now, \( L(V) \) is built from a product of \( V \) with \( V^{(2)} \)
and takes values in \( S^2(U) \).
So \( L(V) \in S^2(U)^{*}\otimes R^{*} \otimes S^2(U) \).
In this case, there are two submodules isomorphic to \( \ell^{8} \),
but only one that appears in \( L(V) \), corresponding to the
contractions
\begin{equation*}
  \wick[offset=1.3em]{\c1{S^2(U^{*})}\otimes\bigl( \c2{S^2(U^{*})}\otimes
  \c1{S^2(\ell^{4}U)}\bigr)\otimes \c2{S^2(U)}}\to \ell^{8}.
\end{equation*}
Contracting in this way seems to be the more natural choice.

Summing up the above discussion, \( L \) and \( Q \) are preserved up
to scale by the \( \GL(4,\bR) \) change of basis, and this specifies
\( Q \) uniquely.
This is completely analogous to what happens in the \( \G_2 \) setting
(see~\cite[Prop.~3.7]{Madsen-S:toric-G2}).

\begin{proposition}
  Under the action of \( \GL(4,\bR) \), \( L(V) \) and \( Q(dV) \)
  transform as elements of \( \ell^{8} \).
  Moreover, up to scaling, \( Q \) is the unique \( S^2(U) \)-valued
  quadratic form in \( dV \) with this property.  \qed
\end{proposition}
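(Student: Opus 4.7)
The plan is to reduce both assertions to the representation-theoretic bookkeeping carried out in the preceding discussion.

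For the weight assertion, I track how each ingredient scales under the $\bR^{\times}$-factor of $\GL(4,\bR)=(\bR^{\times}\times\SL(4,\bR))/\bZ_{2}$. As noted above, $V\in\ell^{-2}\Gamma_{2,0,0}$; the divergence-free condition~\eqref{eq:div-free} constrains $dV\in\ell^{-5}\Gamma_{2,0,1}$; and $d^{2}V\in\ell^{-8}(\Gamma_{1,0,1}\oplus\Gamma_{2,0,2})$. The operator $L$ forms a bilinear pairing of $V$ with $V^{(2)}$ producing an element of $S^{2}(U)=\ell^{-2}\Gamma_{2,0,0}$, so that as a tensor $L$ absorbs weight $\ell^{-2-(-2-8)}=\ell^{8}$; the expression $L(V)$ therefore transforms as an element of the $\ell^{8}$-summand of the relevant $\GL(4,\bR)$-module. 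Similarly, $Q$ is quadratic in $dV$ with total input weight $\ell^{-10}$ and output in $\ell^{-2}\Gamma_{2,0,0}$, yielding the same weight $\ell^{8}$.

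For uniqueness, once the $\ell^{8}$-factor has been extracted, $Q$ is represented by an $\SL(4,\bR)$-equivariant element of $S^{2}(\Gamma_{2,0,1})^{*}\otimes S^{2}(\Gamma_{1,0,0})$. The space of such elements is $\operatorname{Hom}_{\SL(4,\bR)}(S^{2}(\Gamma_{2,0,1}),\Gamma_{2,0,0})$, which by Schur's lemma has dimension equal to the multiplicity of $\Gamma_{2,0,0}=S^{2}(\Gamma_{1,0,0})$ in the decomposition of $S^{2}(\Gamma_{2,0,1})$. Uniqueness up to scalar thus reduces to checking that this multiplicity equals one, which can be verified via a character computation using Weyl's formula for the $A_{3}$-root system, via the Littlewood--Richardson rule, or by computer algebra.

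The main obstacle is this final multiplicity computation. As the earlier discussion notes, the analogous decomposition for $L$ already contains two trivial $\SL(4,\bR)$-submodules, so uniqueness here genuinely depends on the specific structure of $\Gamma_{2,0,1}$ and cannot be read off from a dimension count alone. Once the multiplicity one is confirmed, Schur's lemma immediately pins down $Q$ up to scaling and the proof is complete.
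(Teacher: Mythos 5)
Your high-level strategy matches the paper's: weight-tracking under the central $\bR^{\times}$ factor for the $\ell^{8}$ assertion, and a multiplicity count via Schur's lemma for uniqueness. However, there are two gaps, one more serious than the other.

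The first claim is stronger than your weight bookkeeping delivers. Saying that $L$ and $Q$ "transform as elements of $\ell^{8}$" means they lie in a \emph{trivial} $\SL(4,\bR)$-submodule of the ambient tensor space (tensored with $\ell^{8}$), not merely that the $\bR^{\times}$-scaling works out to the eighth power. Every element of $\ell^{8}S^{2}(\Gamma_{2,0,1})^{*}\otimes\Gamma_{2,0,0}$ has the correct $\bR^{\times}$-weight, including plenty of operators that are not $\SL(4,\bR)$-equivariant (e.g.\ a quadratic form singling out two fixed indices). So the step "therefore $L(V)$ transforms as an element of the $\ell^{8}$-summand" is a non sequitur. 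For $L$ this is almost automatic once one notices the defining formula $L=\sum_{i,j}V_{ij}\,\partial^{2}/\partial\nu_{i}\partial\nu_{j}$ is a contraction over repeated indices — though the paper also points out there are \emph{two} copies of $\ell^{8}$ in $S^{2}(U)^{*}\otimes R^{*}\otimes S^{2}(U)$, so one must identify which contraction gives $L$. For $Q$, whose explicit formula is a lengthy sum of products of partial derivatives, $\SL(4,\bR)$-invariance genuinely requires checking that the concrete coefficients match an equivariant contraction; the paper flags this as "direct computations", and this step is missing from your argument.

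The second gap is the one you acknowledge: uniqueness reduces to $\dim\operatorname{Hom}_{\SL(4,\bR)}(S^{2}(\Gamma_{2,0,1}),\Gamma_{2,0,0})=1$, i.e.\ multiplicity one of $\Gamma_{2,0,0}$ in $S^{2}(\Gamma_{2,0,1})$, and you leave this unverified. The reduction itself is correct and agrees with the paper, which asserts the multiplicity-one property (via the observation that $S^{2}(\Gamma_{1,0,0})^{*}$ sits inside $S^{2}(\Gamma_{2,0,1})^{*}$). To close the argument you would need to actually run the plethysm/Littlewood–Richardson or character computation you outline.
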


\section{Behaviour near singular orbits}
\label{sec:sing-beh}

We now want to address the singular behaviour of toric
\( \Spin(7) \)\hyphen manifolds.
As non-trivial stabilisers, we have tori of dimension \( 2 \) or
\( 1 \).

For a two-dimensional stabiliser, the flat model is
\( M = T^{2} \times \bC^{3} \), with local coordinates \( (x,y) \) on
\( T^{2} = \bR^{2}/2\pi\bZ^{2} \), \( z_{j} = x_{j}+iy_{j} \)
on~\( \bC^{3} \).
Putting \( e^{0} = dx \), and using the standard \( \varphi \) on
\( S^{1} \times \bC^{3} \) as in~\cite{Madsen-S:toric-G2}, we have
\begin{equation*}
  \begin{split}
    \Phi
    &= \tfrac{i}{2}dx\wedge dy\wedge (dz_{1}\wedge d\overline{z}_{1}
      + dz_{2}\wedge d\overline{z}_{2} + dz_{3}\wedge d\overline{z}_{3})
      \eqbreak
      + dx\wedge \re(dz_{1}\wedge dz_{2}\wedge dz_{3})
      \eqbreak
      - dy \wedge \im(dz_{1}\wedge dz_{2}\wedge dz_{3})
      - \tfrac{1}{8}(dz_{1}\wedge d\overline{z}_{1}
      + dz_{2}\wedge d\overline{z}_{2} + dz_{3}\wedge d\overline{z}_{3})^{2}
  \end{split}
\end{equation*}
with Killing vector fields
\begin{gather*}
  U_{0} = \D{}{x},\quad U_{1} = \D{}{y},\quad U_{2} =
  2\re\Bigl(i\Bigl(z_{1}\D{}{z_{1}} -
  z_{3}\D{}{z_{3}}\Bigr)\Bigr), \\
  U_{3} = 2\re\Bigl(i\Bigl(z_{2}\D{}{z_{2}} -
  z_{3}\D{}{z_{3}}\Bigr)\Bigr)
\end{gather*}
generating the \( T^{4} \) action.
The components of the corresponding multi-moment map are:
\begin{gather*}
  \nu_{0} = \im(z_{1}z_{2}z_{3}),\quad
  \nu_{1} = \re(z_{1}z_{2}z_{3}),\\
  \nu_{2} = -\tfrac{1}{2}(\abs{z_{2}}^{2}-\abs{z_{3}}^{2}),\quad
  \nu_{3} = \tfrac{1}{2}(\abs{z_{1}}^{2}-\abs{z_{3}}^{2}).
\end{gather*}

For one-dimensional stabiliser the flat model is
\( M = (T^{3} \times \bR) \times \bC^{2} \), with local coordinates
\( x_{1},x_{2},x_{3},u \) for \( T^{3} \times \bR \) and \( (z,w) \)
for \( \bC^{2} \),
\begin{equation*}
  \begin{split}
    \Phi
    &= dx_{1}\wedge dx_{2}\wedge dx_{3} \wedge du
      \eqbreak
      + (dx_{2}\wedge dx_{3} - dx_{1}\wedge du)\wedge
      \tfrac{i}{2}(dz\wedge d\overline{z} + dw\wedge d\overline{w})
      \eqbreak
      - dx_{1}\wedge \re((dx_{2}-idx_{3})\wedge dz\wedge dw)
      \eqbreak
      + du \wedge \im((dx_{2}-idx_{3})\wedge dz\wedge dw)
      \eqbreak
      + \tfrac{1}{8}(dz\wedge d\overline{z} + dw\wedge
      d\overline{w})^{2},
  \end{split}
\end{equation*}
with vector fields
\begin{gather*}
  U_{0} = \D{}{x_{1}},\quad U_{1} = \D{}{x_{2}},\quad U_{2}=
  \D{}{x_{3}}, \\
  U_{3} = -2 \re\Bigl(i\Bigl(z\D{}{z} - w\D{}{w}\Bigr)\Bigr).
\end{gather*}
In this case, the multi-moment map \( \nu=(\nu_0,\nu_1,\nu_2,\nu_3) \)
has
\begin{equation*}
  \nu_{0} = \tfrac{1}{2}(\abs{z}^{2}-\abs{w}^{2}),\quad
  \nu_{1} = -\re(zw),\quad
  \nu_{2} = - \im(zw),\quad
  \nu_{3} = -u.
\end{equation*}

Now let us consider a general \( \Spin(7) \)-manifold \( M \) with
multi\hyphen Hamiltonian \( T^{4} \)-action.
Suppose \( p \in M \) is a point with stabiliser \( T^{2} \).
As \( \Spin(7) \) acts transitively on the space of two-dimensional
subspaces in \( \bR^{8} \), we may identify \( T_{p}M \) with
\( \bR^{2} \times \bC^{3} = T_{0}(T^{2} \times \bC^{3}) \) in such a
way that the \( \Spin(7) \)-forms agree at this point.

The exponential map of \( M \) at \( p \) identifies a neighbourhood
of \( 0 \in T_{0}(T^{2} \times \bC^{3}) \) equivariantly with a
neighbourhood of \( p\in M \).
We may then choose our identifications so that
\( (U_{2})_{p} = 0 = (U_{3})_{p} \),
\( (\nabla U_{2})_{p} = \diag(i,0,-i) \) and
\( (\nabla U_{3})_{p} = \diag(0,i,-i) \), both in \( \su(3) \).
Now note that \( \spin(7) \) contains \( \su(4) \) as a subalgebra and
that \( \spin(7) \cong \su(4) + W \), where
\( W \otimes \bC \cong \Lambda^{2}\bC^{4} \).
It follows that the diagonal maximal torus \( \lie{t}^{2} \) in
\( \su(3) \subset \su(4) \) has all weights on \( W\otimes \bC \)
non-zero and that the centraliser of \( \lie{t}^{2} \) in
\( \spin(7) \) is just the diagonal maximal torus of \( \su(4) \).
For any \( U \) generating \( T^{4}/\Stab(p) \cong T^{2} \), we have
\( (\nabla U)_{p} \) is an element of \( \spin(7) \) commuting with
both \( (\nabla U_{2})_{p} \) and \( (\nabla U_{3})_{p} \),
cf.~\cite[\S4.2.1]{Madsen-S:toric-G2}.
Thus there exist \( a,b \in \bR \) such that
\( (\nabla (U+aU_{2}+bU_{3}))_{p} \) is proportional to
\( \diag(-3i,i,i,i) \).
It follows that we can choose \( U_{0} \) and \( U_{1} \) so that
at~\( p \) they are orthonormal,
\( (\nabla U_{0})_{p} = c\diag(-3i,i,i,i) \), \( c\in\bR \), and
\( (\nabla U_{1})_{p} = 0 \).  But we have
\begin{equation*}
  0 = [U_{0},U_{1}]_{p} = (\nabla_{U_{0}}U_{1})_{p} -
  (\nabla_{U_{1}}U_{0})_{p}
  = 0 + 3c i U_{1} = - 3c U_{0},
\end{equation*}
which implies that \( c = 0 \).
Hence, \( (\nabla U_{0})_{p} = 0 \) too.

Computing covariant derivatives as in~\cite{Madsen-S:toric-G2}, we now
have that \( \nabla^{m}U_{j} \) agrees with the flat model at~\( p \)
for
\begin{equation*}
  (m,j) \in (\Set{0,1} \times \Set{0,1})
  \cup (\Set{0,1,2} \times \Set{2,3}),
\end{equation*}
and is zero for
\( (m,j) \in (\Set{1}\times \Set{0,1}) \cup (\Set{0,2} \times
\Set{2,3}) \).
Thus \( \nabla^{m}\nu_{i} \), which is a sum of terms
\(
\Phi(\nabla^{m_{1}}U_{j},\nabla^{m_{2}}U_{k},\nabla^{m_{3}}U_{\ell},\any)
\) with \( m = 1 + m_{1} + m_{2} + m_{3} \), agrees with the flat
model at \( p \) for
\begin{equation*}
  (m,i) \in (\Set{0,1,2,3,4} \times \Set{0,1})
  \cup (\Set{0,1,2,3} \times \Set{2,3})
\end{equation*}
This exactly matches the degree of agreement we have in the
\( \G_{2} \)-case, and we can apply the analysis
of~\cite[\S4.4]{Madsen-S:toric-G2} to conclude that the multi-moment
map induces a local homeomorphism of the quotient.

\smallbreak Let us now turn to the case when \( p\in M \) has a
stabiliser of dimension one.
Let \( U_{3} \) be a generator for the stabiliser~\( S^{1} \).
Let \( U_{0}, U_{1}, U_{2} \) be any three vector fields generated by
the \( T^{4} \)-action, with the property that they are orthonormal
at~\( p \).
Then the triple-cross product
\( (U_{0} \times U_{1} \times U_{2})_{p} \) is an invariant unit
vector in \( T_{p}M \) that is orthogonal to the orbit.
As \( \Spin(7) \) acts transitively on three-dimensional subspaces of
\( \bR^{8} \) we may identify \( T_{p}M \) with
\( (\bR^{3} \times \bR) \times \bC^{2} = T_{0}(T^{3} \times \bR)
\times \bC^{2} \) in such a way that \( (\nabla U_{3})_{p} \) acts as
\( \diag(i,-i) \in \su(2) \) and the \( \Spin(7) \)-forms agree
at~\( p \).
We have \( d\nu_{3} = - (U_{0} \times U_{1} \times U_{2})^\flat \) is
non-zero at \( p \) and so provides an invariant transverse coordinate
to a seven-dimensional level set through~\( p \).
We have \( (d\nu_{i})_{p} = 0 \), for \( i=0,1,2 \), and
\( \nabla^{2}\nu_{i} \) is determined at \( p \) by
\( U_{0},U_{1},U_{2} \) and \( \nabla U_{3} \) via \( \Phi \), so
these agree with the flat model at this point.
This means that we can once again apply the \( \G_{2} \)-analysis to
conclude that the multi-moment maps provide a local homeomorphism to
\( \bR^{4} \) around \( p \).

Summarising the discussion of this section, we have the following
description of the orbit space of toric \( \Spin(7) \)-manifolds:

\begin{theorem}
  Let \( (M,\Phi) \) be a toric \( \Spin(7) \)-manifold.
  Then \( M/T^4 \) is homeomorphic to a smooth four-manifold.
  Moreover, the multi-moment map \( \nu \) induces a local
  homeomorphism \( M/T^4\to \bR^4 \).  \qed
\end{theorem}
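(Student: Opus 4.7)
The plan is to reduce the statement to a jet-comparison with explicit flat models, after which the quotient-topology analysis developed in the $\G_2$-setting of~\cite{Madsen-S:toric-G2} can be applied essentially verbatim. The regular part $M_0$ is already handled by Theorem~\ref{thm:toric-Spin7-charac}: there $\nu$ descends to a local diffeomorphism $M_0/T^4 \to \bR^4$, so the content of the statement is entirely about the behaviour at points with non-trivial isotropy. By Lemma~\ref{lem:conn-isotr} these stabilisers are either circles or $T^2$s, and I would treat the two cases separately.

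For each isotropy type, I would first identify the flat model. Since $\Spin(7)$ acts transitively on the Stiefel manifolds of orthonormal $k$-frames in $\bR^8$ for $k \leqslant 3$, the infinitesimal data of the stabiliser action on the normal space is determined up to a $\Spin(7)$-change of frame. I would write down the standard $\Spin(7)$-form on $T^2 \times \bC^3$ and on $(T^3 \times \bR) \times \bC^2$, choose generators of the $T^4$-action making the infinitesimal stabiliser act diagonally, and compute the multi-moment map explicitly. This gives the flat target models whose quotient structure is manifestly that of a smooth $\bR^4$.

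Next, for a general singular point $p \in M$, I would use the exponential map to equivariantly identify a neighbourhood of $p$ with a neighbourhood of $0$ in the flat model, in such a way that the $\Spin(7)$-forms agree at~$p$. The key calculation is to adjust the choice of generators of $T^4$ so that their values and first covariant derivatives at $p$ match those of the flat model to as high an order as possible. Here one exploits that the centraliser of the infinitesimal stabiliser inside $\spin(7)$ is small --- essentially a maximal torus of $\su(4)$ in the $T^2$-case and a maximal torus of $\su(2)$ in the $S^1$-case --- so that the non-stabiliser generators can be normalised using the bracket relation $[U_i,U_j] = \nabla_{U_i}U_j - \nabla_{U_j}U_i = 0$. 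From this, since each $\nu_i$ is built algebraically from $\Phi$ and products of the $U_j$, one deduces that $\nabla^m \nu_i$ coincides with the flat model to the same order as in the $\G_2$-analysis.

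The final step is to invoke the topological argument of~\cite[\S4.4]{Madsen-S:toric-G2}, which shows that a multi-moment map whose jet at a singular orbit matches the flat model to the requisite order induces a local homeomorphism of the orbit space to $\bR^4$. The main obstacle I expect is precisely the bookkeeping of this jet comparison: one must verify that the degrees of agreement $(m,i)$ for which $\nabla^m \nu_i$ matches the flat model are sufficient to feed into the $\G_2$-machinery, and this in turn depends on a careful choice of adapted generators using the commutativity of the torus and the structure of the centraliser inside $\spin(7)$. Once this is in place, gluing the local models over singular and regular orbits yields the asserted global local homeomorphism $M/T^4 \to \bR^4$ and endows $M/T^4$ with the structure of a smooth four-manifold.
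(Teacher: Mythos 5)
Your proposal follows essentially the same route as the paper: identify the flat models $T^2\times\bC^3$ and $(T^3\times\bR)\times\bC^2$, use the exponential map for an equivariant identification, normalise generators via the centraliser of the infinitesimal stabiliser in $\spin(7)$ together with the commutativity relation $[U_i,U_j]=0$, and then feed the resulting jet-agreement of $\nabla^m\nu_i$ into the topological argument of \cite[\S4.4]{Madsen-S:toric-G2}. The only small slip is describing the relevant centraliser in the $S^1$-case as a maximal torus of $\su(2)$ --- the paper handles that case more directly by observing that $d\nu_3$ is already non-zero at $p$ and provides a transverse coordinate, with the remaining $\nu_i$ matched via $\nabla^2\nu_i$; but this does not affect the soundness of the approach.
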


We suspect that the image of the set of special orbits plays an
important role, so it is worthwhile addressing this topic more
explicitly.
First, if \( p\in M \) is a point with stabiliser \( S^1 \), then the
above analysis gives us an integral basis \( U_0,U_1,U_2,U_3 \) of
\( \lt^4 \) such that \( (U_3)_p=0\).
Inspection shows that this holds for all points of \( T^3\times\bR \)
in the flat model.
Hence, the first three components of \( \nu \) are constant on this
set, and the image under \( \nu \) of this family of singular orbits
is a straight line parameterised by \( \nu_3 \).

Next, let us consider the case when \( p \) has \( T^2 \) as its
isotropy subgroup.
Then the normal bundle of the two-torus \( T^4 p \) is \( \bC^3 \),
and there are three families of points with circle stabiliser, meeting
at \( p \).
Again looking at the associated flat model, we see that there is an
integral basis \( U_0,U_1,U_2,U_3 \) of \( \lt^4 \) that has
\( (U_2)_p=0=(U_3)_p \) at \( p \) and such that \( U_2 \), \( U_3 \)
and \( -U_2-U_3 \) generate the circle stabilisers of the three
families.
The images of these families under \( \nu \) have constant \( \nu_0 \)
and \(\nu_1 \) coordinates and give three half-lines meeting at
\( \nu(p) \) and lying in \( \nu_2 \), \( \nu_3\) or \( \nu_2-\nu_3 \)
constant.

Of course, we do not generally know how these lines are aligned in the
target space \( \bR^4 \).

\begin{proposition}
  The image in \( M/T^4 \) of the union \( M\setminus M_0 \) of
  singular orbits consists locally of trivalent graphs in \( \bR^4 \)
  with edges that are straight lines of rational slope in the
  \( \nu \)-coordinates, with primitive slope vectors summing to zero
  at each vertex.  \qed
\end{proposition}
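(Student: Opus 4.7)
The plan is to deduce the proposition directly from the explicit flat-model calculations in~\S\ref{sec:sing-beh}.

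For the combinatorial structure, the preceding discussion shows that \( M\setminus M_{0} \) consists only of \( S^{1} \)- and \( T^{2} \)-stabiliser orbits, and the flat model near a \( T^{2} \)-stabiliser point~\( p \) exhibits the normal slice \( \bC^{3} \) with exactly three families of \( S^{1} \)-stabiliser orbits, one for each coordinate complex line \( z_{j}\neq 0 \). Hence \( \nu(M\setminus M_{0}) \) is locally a graph whose vertices are images of \( T^{2} \)-orbits, whose edges are images of \( S^{1} \)-stabiliser families, and which is trivalent at each vertex.

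For the edges, given an \( S^{1} \)-stabiliser family with primitive lattice generator \( v\in \Lambda=\ker(\exp\colon \lt^{4}\to T^{4}) \), complete \( v \) to an integral basis \( U_{0},U_{1},U_{2},v \) of~\( \lt^{4} \). The \( S^{1} \)-stabiliser flat model then shows that, in the multi-moment coordinates associated with this basis, the family is carried to a straight line along the last axis. Under any integral change of basis in~\( \GL(4,\bZ) \), the multi-moment map, valued in \( \Lambda^{3}(\lt^{4})^{*} \), transforms linearly via the natural \( \GL(4,\bR) \)-representation (up to a determinant factor via the lattice volume form). Hence in any fixed reference integral basis the edge is a straight line whose direction is the image of~\( v \) under an invertible linear map \( \phi\colon \lt^{4}\to \bR^{4} \), and therefore rational; after rescaling it becomes a primitive integer vector.

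For the zero-sum condition at a vertex, the \( T^{2} \)-stabiliser flat model shows that, in its adapted integral basis, the three outward edge directions at the vertex \( \nu(p)=0 \) lie along the rational vectors \( -e_{2},\, e_{3},\, e_{2}-e_{3} \), arising from the families \( z_{1}=z_{3}=0 \), \( z_{2}=z_{3}=0 \), \( z_{1}=z_{2}=0 \) respectively; these plainly sum to zero. The flat model matches \( (M,\Phi) \) equivariantly through the exponential map, with \( \nu \) agreeing to the required order at~\( p \), so this (zeroth-order) directional data transfers to the actual manifold; and any change of integral basis preserves the zero-sum under the linear map~\( \phi \) of the previous paragraph. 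The main bookkeeping obstacle is confirming the correct signs on the outward directions, which is a direct inspection of the flat-model formulas for \( \nu \) given above.
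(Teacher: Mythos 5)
Your proposal is correct and follows the same line of argument as the paper's preceding discussion: identify the two flat models, read off the behaviour of \( \nu \) along the singular families, and use the equivariant exponential-map identification to transfer the directional data to \( M \). In fact you supply the one step the paper leaves implicit, namely the actual verification that the three primitive slope vectors sum to zero: from \( \nu_{2} = -\tfrac12(\abs{z_{2}}^{2}-\abs{z_{3}}^{2}) \), \( \nu_{3} = \tfrac12(\abs{z_{1}}^{2}-\abs{z_{3}}^{2}) \), the outward directions of the families \( z_{1}=z_{3}=0 \), \( z_{2}=z_{3}=0 \), \( z_{1}=z_{2}=0 \) are \( -e_{2} \), \( e_{3} \), \( e_{2}-e_{3} \), which visibly sum to zero. (Your computation in fact corrects two apparent sign slips in the paper's own description of the \( T^{2} \) flat model: the third circle stabiliser is generated by \( U_{2}-U_{3} \), not \( -U_{2}-U_{3} \), and the corresponding half-line has \( \nu_{2}+\nu_{3} \), not \( \nu_{2}-\nu_{3} \), constant.) One small caveat: the map \( \phi \) you invoke, sending a primitive stabiliser generator \( v \) to the edge direction, is only well defined up to sign — the first family already shows \( U_{2} \mapsto -e_{2} \), not \( +e_{2} \) — but this is immaterial to the rationality claim, and you do not actually rely on sign consistency of \( \phi \) for the zero-sum, which you obtain by direct inspection. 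It would also be worth noting that the primitivity of the transformed slope vectors after a \( \GL(4,\bZ) \) change of basis follows because each pair of the three vectors is part of an integral basis, so that in particular \( M e_{2} - M e_{3} \) remains primitive; this is needed to say \enquote{primitive slope vectors sum to zero} in an arbitrary integral basis, not just the adapted one.
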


\section{Orthogonal Killing vectors}
\label{sec:diag-case}

In contrast with the \( \G_2 \)-case (see, for example,
\cite[\S5.1.2]{Madsen-S:toric-G2}), there are no known examples of
complete toric \( \Spin(7) \)-manifolds with full holonomy.
On the other hand, one would expect that also in this setting, the
analysis of `diagonal' solutions might lead to simple (incomplete)
explicit metrics with full holonomy.

So let us assume \( V_{ij}=0 \) for all \( i\neq j \), i.e., the
generating vector fields for the torus action are orthogonal.
Writing \( V_i \) for \( V_{ii} \), the \( \Spin(7) \)-metric now
takes the form
\begin{equation*}
  g=\sum_{i=0}^3\frac1{V_i}\bigl(\theta_i^2+V_0V_1V_2V_3d\nu_i^2\bigr).
\end{equation*}

In this case, the curvature \( 2 \)-forms associated with the
\( T^4 \) fibration are given by:
\begin{align*}
  \omega_0 &= - V_{3}\D{V_{0}}{\nu_3}d\nu_{12}
             + V_{2}\D{V_{0}}{\nu_2}d\nu_{13}
             - V_{1}\D{V_{0}}{\nu_1}d\nu_{23},\\
  \omega_1 &= V_{3}\D{V_{1}}{\nu_3}d\nu_{02}
             - V_{2}\D{V_{1}}{\nu_2}d\nu_{03}
             + V_{0}\D{V_{1}}{\nu_0}d\nu_{23},\\
  \omega_2 &= - V_{3}\D{V_{2}}{\nu_3}d\nu_{01}
             + V_{1}\D{V_{2}}{\nu_1}d\nu_{03}
             - V_{0}\D{V_{2}}{\nu_0}d\nu_{13},\\
  \omega_3 &= V_{2}\D{V_{3}}{\nu_2}d\nu_{01}
             - V_{1}\D{V_{3}}{\nu_1}d\nu_{02}
             + V_{0}\D{V_{3}}{\nu_0}d\nu_{12}.
\end{align*}

The divergence-free condition tells us that
\( \partial V_i/\partial\nu_i=0 \), for \( i=0,1,2,3 \).
Then the condition \( d\omega=0 \) is given by the equations
\begin{equation}
  \label{eq:L-red}
  \sum_{j=0}^3V_{j}\Dsq{V_{i}}{\nu_j} = 0,\qquad i=0,1,2,3,
\end{equation}
together with
\begin{equation}
  \label{eq:Q-red}
  \D{V_{\mathstrut i}}{\nu_{j}}\D{V_{j}}{\nu_{i}}=0,\qquad i,j
  \in \Set{0,1,2,3},\ i\ne j.
\end{equation}

\begin{proposition}
  After permuting indices, analytic solutions to
  equation~\eqref{eq:Q-red} with
  \( \partial V_{i}/\partial \nu_{i} = 0 \) for all \( i \), have one
  of the following forms:
  \begin{enumerate}
  \item\label{item:r32} \( V_{0} = V_{0}(\nu_{1},\nu_{2},\nu_{3}) \),
    \( V_{1} = V_{1}(\nu_{2},\nu_{3}) \),
    \( V_{2} = V_{2}(\nu_{3}) \), \( V_{3} \)~constant;
  \item\label{item:r31} \( V_{0} = V_{0}(\nu_{1},\nu_{2},\nu_{3}) \),
    \( V_{1} = V_{1}(\nu_{2}) \), \( V_{2} = V_{2}(\nu_{3}) \),
    \( V_{3} = V_{3}(\nu_{1})\);
  \item\label{item:r23} \( V_{0} = V_{0}(\nu_{1},\nu_{3}) \),
    \( V_{1} = V_{1}(\nu_{2},\nu_{3}) \),
    \( V_{2} = V_{2}(\nu_{0},\nu_{3}) \), \( V_{3} \)~constant;
  \item\label{item:r22} \( V_{0} = V_{0}(\nu_{1},\nu_{2}) \),
    \( V_{1} = V_{1}(\nu_{2},\nu_{3}) \),
    \( V_{2} = V_{2}(\nu_{3}) \), \( V_{3} = V_{3}(\nu_{0}) \);
  \end{enumerate}
\end{proposition}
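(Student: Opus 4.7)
The plan is to exploit analyticity to turn equation~\eqref{eq:Q-red} into a combinatorial constraint. Since~\eqref{eq:Q-red} asserts that a product of real-analytic functions vanishes pointwise, and the zero set of a non-zero real-analytic function on a connected open set has empty interior, for each pair \( i\neq j \) at least one of \( \D{V_{i}}{\nu_{j}} \) or \( \D{V_{j}}{\nu_{i}} \) must vanish identically. Combined with \( \D{V_{i}}{\nu_{i}} = 0 \), this captures the full content of~\eqref{eq:Q-red}.

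I would then encode each solution as a digraph \( D \) on vertices \( \Set{0,1,2,3} \), placing an arrow \( i\to j \) precisely when \( V_{i} \) depends non-trivially on \( \nu_{j} \). The dichotomy above says \( D \) has no loops and no directed \( 2 \)-cycle. Completing \( D \) by an arbitrary orientation on any pair not already carrying an arrow gives a tournament on four vertices, so the functional dependence pattern of any solution is, up to relabelling of indices, contained in one such tournament.

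Next, I would invoke the classical enumeration: there are exactly four isomorphism classes of tournaments on four vertices, distinguished by their sorted score sequences: \( (0,1,2,3) \), the transitive tournament; \( (1,1,1,3) \), a source dominating a \( 3 \)-cycle; \( (0,2,2,2) \), a sink dominated by a \( 3 \)-cycle; and \( (1,1,2,2) \), the unique strongly connected case. For each class, pick a representative labelling and read off the out-neighbourhood \( S_{i}=\Set{j:i\to j} \) of each vertex; the corresponding maximal dependence form is \( V_{i}=V_{i}(\nu_{j}:j\in S_{i}) \). Matching the four tournament classes in this order should recover precisely the forms listed as cases~\eqref{item:r32}--\eqref{item:r22}.

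Both the analytic-function dichotomy and the tournament enumeration are standard, so I expect no serious obstacle; the remaining work is purely combinatorial bookkeeping. The only care required is to verify the bijective matching between the four tournament isomorphism classes and the four stated forms, which can be done directly by comparing out-neighbourhoods (or, equivalently, noting that \( (1,1,1,3) \) and \( (0,2,2,2) \) force a \( 3 \)-cycle on the non-extremal vertices, and that \( (1,1,2,2) \) admits essentially one realisation).
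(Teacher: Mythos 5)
Your argument is correct and takes a genuinely different, more structural route than the paper. The paper proceeds by an iterative case analysis on the quantity \( r = \) the maximal number of non-vanishing partials of a single \( V_{i} \): first \( r=3 \) (then a sub-case split on \( r' = r(V_{1},V_{2},V_{3}) \)), then \( r=2 \), then \( r=1 \), each time rearranging indices and using~\eqref{eq:Q-red} to kill further derivatives. Your reformulation starts from the same analytic dichotomy (for each unordered pair \( \{i,j\} \), at least one of \( \partial V_{i}/\partial\nu_{j} \), \( \partial V_{j}/\partial\nu_{i} \) vanishes identically on a connected domain, since the zero set of a non-zero real-analytic function has empty interior), but then encodes the remaining freedom as a loopless, \( 2 \)-cycle-free digraph and completes it to a tournament on four vertices. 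Invoking the classification of \( 4 \)-vertex tournaments by score sequence — \( (3,2,1,0) \), \( (3,1,1,1) \), \( (2,2,2,0) \), \( (2,2,1,1) \) — then immediately yields an upper bound on the dependence pattern in each of the four isomorphism classes, and I have checked that the out-neighbourhoods match cases~\eqref{item:r32}--\eqref{item:r22} exactly as you indicate (transitive, source over a \( 3 \)-cycle, sink under a \( 3 \)-cycle, strongly connected). The gain is that the count of four cases becomes transparent rather than emerging from nested branching, and the argument would extend mechanically to higher rank (at the cost of tournament enumeration no longer being determined by score sequences when \( n \ge 5 \)). The only point worth flagging is that your observation that the dependence pattern is merely \emph{contained in} a tournament, rather than equal to one, is exactly what the proposition needs, since the stated forms list maximal allowed dependencies; make this explicit in the final write-up so the "completion to a tournament" step does not look like it is adding spurious constraints.
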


\begin{proof}
  Define \( r=r(V_{0},\dots,V_{3}) \) to be the largest number such
  that some \( V_{i} \) has \( r \)~partial derivatives
  \( \partial V_{i}/\partial \nu_{j} \) not identically zero.

  If \( r = 3 \), then we reorder indices so that there is a on open
  dense set~\( U \) on which
  \( (\partial V_{0}/\partial \nu_{i})_{p} \ne 0 \) for \( i=1,2,3 \)
  and all \( p\in U \).
  Equation~\eqref{eq:Q-red} then gives
  \( \partial V_{i}/\partial \nu_{0} = 0 \) on \( U_{0} \) for
  \( i=1,2,3 \).
  Thus \( V_{1} = V_{1}(\nu_{2},\nu_{3}) \),
  \( V_{2} = V_{2}(\nu_{1},\nu_{3}) \) and
  \( V_{3}(\nu_{1},\nu_{2}) \).
  Let \( r' = r(V_{1},V_{2},V_{3}) \).
  If \( r' = 2 \), then we rearrange to get
  \( (\partial V_{1}/\partial \nu_{j})_{p} \ne 0 \) for \( j=2,3 \),
  for all \( p \) in a (smaller) open dense set.
  It follows that \( V_{2} = V_{2}(\nu_{3}) \) and
  \( V_{3} = V_{3}(\nu_{2}) \).
  But by~\eqref{eq:Q-red}, only one of these can have a derivative
  that is not identically zero, so we have case~\ref{item:r32}.
  If \( r' = 1 \), then we may assume \( V_{1} = V_{1}(\nu_{2}) \) is
  not constant zero.
  It follows that \( \partial V_{2}/\partial \nu_{1} \equiv 0 \), so
  \( V_{2} = V_{2}(\nu_{3}) \) and we get case~\ref{item:r31}.

  If \( r=2 \), then we may take \( V_{0} = V_{0}(\nu_{1},\nu_{2}) \),
  non-constant in each variable.
  This implies \( V_{1} = V_{1}(\nu_{2},\nu_{3}) \).
  If \( V_{1} \) is non-constant in both variables, then
  \( V_{2} = V_{2}(\nu_{3}) \) and
  \( V_{3} = V_{3}(\nu_{0},\nu_{2}) \).
  Now either \( V_{2} \) is constant, which may be rearranged to
  case~\ref{item:r23}, or \( V_{3} = V_{3}(\nu_{0}) \), which is
  case~\ref{item:r22}.

  For \( r = 1 \), we may assume \( V_{0} = V_{0}(\nu_{1}) \).
  If this is non-constant, then we may take
  \( V_{1} = V_{1}(\nu_{2}) \).
  When \( V_{1} \) is non-constant, we then have \( V_{2} \) is
  \( V_{2}(\nu_{3}) \) or \( V_{2}(\nu_{0}) \).
  In the former case \( V_{3} = V_{3}(\nu_{0}) \), a subcase
  of~\ref{item:r31}, or \( V_{3} = V_{3}(\nu_{1})\), a subcase
  of~\ref{item:r22}.
  The latter case gives subcases of~\ref{item:r31}.
\end{proof}

Solutions to the full \( \Spin(7) \) equations with \( V_{3} \)
constant are just Riemannian products of a circle with a toric
\( \G_2 \)-manifold.
Thus irreducible solutions have to fall under cases~\ref{item:r31}
or~\ref{item:r22} above.
A simple solution of this type is given by taking \( V_0=\nu_1 \),
\( V_1=\nu_2\), \( V_2=\nu_3 \), \( V_3=\nu_0 \), \( \nu_{i} > 0 \)
for all~\( i \).
This gives the following metric for which a curvature computation
shows that its (restricted) holonomy is equal to \( \Spin(7) \):
\begin{equation*}
  \begin{split}
    g
    &= \frac1{\nu_1}\theta_0^2 + \frac1{\nu_2}\theta_1^2 +
      \frac1{\nu_3}\theta_2^2 + \frac1{\nu_0}\theta_3^2 \eqbreak
      + \nu_2\nu_3\nu_0d\nu_0^2 + \nu_1\nu_3\nu_0d\nu_1^2 +
      \nu_1\nu_2\nu_0d\nu_2^2 + \nu_1\nu_2\nu_3d\nu_3^2,
  \end{split}
\end{equation*}
where
\begin{equation*}
  d\theta_0=-\nu_2d\nu_{23},\quad
  d\theta_1=\nu_3d\nu_{30},\quad
  d\theta_2=-\nu_0d\nu_{01},\quad
  d\theta_3=\nu_1d\nu_{12}.
\end{equation*}

In general, if \( V_{i} \) is a function of a single variable, then
equation~\eqref{eq:L-red} forces \( V_{i} \) to be linear in that
variable.
Thus after an affine change of variables, for irreducible solutions in
case~\ref{item:r31}, the equations~\eqref{eq:L-red} become
\begin{equation}
  \label{eq:r31}
  \nu_{2}\Dsq{V_{0}}{\nu_{1}} + \nu_{3}\Dsq{V_{0}}{\nu_{2}} +
  \nu_{1}\Dsq{V_{0}}{\nu_{3}} = 0.
\end{equation}
A simple solution is then \( V_{0} = \nu_{1}\nu_{2}\nu_{3} \) giving
the metric
\begin{equation*}
  \begin{split}
    g
    &= \frac{1}{\nu_{1}\nu_{2}\nu_{3}} \theta_{0}^{2} + \frac{1}{\nu_{2}}
      \theta_{1}^{2} + \frac{1}{\nu_{3}} \theta_{2}^{2} +
      \frac{1}{\nu_{1}} \theta_{3}^{2}
      \eqbreak
      + \nu_{1}\nu_{2}\nu_{3} d\nu_{0}^{2} +
      \nu_{1}^{2}\nu_{2}\nu_{3}^{2}d\nu_{1}^{2} +
      \nu_{1}^{2}\nu_{2}^{2}\nu_{3}d\nu_{2}^{2} +
      \nu_{1}\nu_{2}^{2}\nu_{3}^{2}d\nu_{3}^{2},
  \end{split}
\end{equation*}
where
\begin{gather*}
  d\theta_{0} = - \nu_{1}^{2}\nu_{2} d\nu_{12} - \nu_{3}^{2}\nu_{1}
  d\nu_{31} - \nu_{2}^{2}\nu_{3}d\nu_{23},\\
  d\theta_{1} = - \nu_{3}d\nu_{03},\quad d\theta_{2} = -
  \nu_{1}d\nu_{01},\quad d\theta_{3} = - \nu_{2}d\nu_{02}
\end{gather*}
on \( \nu_{i} > 0 \) for~\( i = 1,2,3 \).
Another solution is obtained by taking
\( V_0 = \nu_1^3 \nu_3 + \nu_2^3 \nu_1 - 2 \nu_3^3 \nu_2 \) on the
non-empty domain where \( V_{0} > 0 \) and \( \nu_{i} > 0 \) for
\( i = 1,2,3 \).

For case~\ref{item:r22}, we have
\begin{equation*}
  V_{1}\Dsq{V_{0}}{\nu_{1}} + \nu_{3}\Dsq{V_{0}}{\nu_{2}} = 0,\quad
  \nu_{3}\Dsq{V_{1}}{\nu_{2}} + \nu_{0}\Dsq{V_{1}}{\nu_{3}} = 0.
\end{equation*}
But this holds for an open set in all the variables.
In the second equation, letting \( \nu_{0} \) vary we see that
\( V_{1} = V_{1}(\nu_{2},\nu_{3}) \) must be linear in each variable,
\( V_{1} = A + B\nu_{2} + C\nu_{3} + D\nu_{2}\nu_{3} \).
Considering the \( \nu_{3} \)-dependence, the first equation decouples
as
\begin{equation}
  \label{eq:r22}
  (A+B\nu_{2})\Dsq{V_{0}}{\nu_{1}} = 0,\quad
  (C+D\nu_{2})\Dsq{V_{0}}{\nu_{1}} + \Dsq{V_{0}}{\nu_{2}} = 0.
\end{equation}
If \( A \) or \( B \) is non-zero, then
\( V_{0} = V_{0}(\nu_{1},\nu_{2}) \) becomes linear in both variables;
otherwise we have \( V_{1} = C\nu_{3} + D\nu_{2}\nu_{3} \) and
\( V_{0} \)~satisfies the last equation of~\eqref{eq:r22}.

\providecommand{\bysame}{\leavevmode\hbox to3em{\hrulefill}\thinspace}
\providecommand{\MR}{\relax\ifhmode\unskip\space\fi MR }
\providecommand{\MRhref}[2]{%
  \href{http://www.ams.org/mathscinet-getitem?mr=#1}{#2}
}
\providecommand{\href}[2]{#2}

\end{document}